\theoremstyle{thmstyleone}%
\newtheorem{theorem}{Theorem}
\newcommand{\nbc}{\mbox{{\textmd {\it C}}}}
\newcommand{\nba}{\mbox{{\textmd {\it A}}}}
\newcommand{\bb}{\mbox{\boldmath$b$}}
\newcommand{\be}{\mbox{\boldmath$e$}}
\newcommand{\br}{\mbox{\boldmath$r$}}
\newcommand{\bv}{\mbox{\boldmath$v$}}
\newcommand{\bx}{\mbox{\boldmath$x$}}
\newcommand{\by}{\mbox{\boldmath$y$}}
\newcommand{\bz}{\mbox{\boldmath$z$}}
\newcommand{\nul}{{\cal N}}
\newcommand{\ran}{{\cal R}}
\newcommand{\rank}{\mbox{rank}}
\newcommand{\real}{{\bf R}}
\newcommand{\rmn}{ {\bf R}^{m \times n} }
\newcommand{\Rm}{ {\bf R}^m }
\newcommand{\rn}{{\bf R}^n}
\newcommand{\rnm}{{\bf R}^{n \times m}}
\newcommand{\rnn}{{\bf R}^{n \times n}}
\newcommand{\trans}{{\mbox{\scriptsize T}}}
\def\vector#1{\mbox{\boldmath $#1$}}
\theoremstyle{thmstyletwo}%
\theoremstyle{thmstylethree}%
\begin{document}

\title{Right preconditioned GMRES for arbitrary singular systems}









\author{Kota Sugihara\thanks{E-mail: kouta.sugihara@gmail.com} \,
and Ken Hayami\thanks{Professor Emeritus, National Institute of Informatics, and 
The Graduate University for Advanced Studies (SOKENDAI),
Tokyo, Japan, E-mail: hayami@nii.ac.jp}
}


\maketitle

\abstract{Brown and Walker (1997) showed that GMRES determines a least squares solution of 
$\nba \bx = \bb$
where  $ \nba \in \rnn $
without breakdown for arbitrary $ \bb, \bx_0 \in \rn $ if and only if $\nba$ is range-symmetric, 
i.e. $ \ran(\nba^\trans) = \ran(\nba) $, where $ \nba $ may be singular and $\bb $ may not be in the 
range space $\ran(\nba)$ of $\nba$.
In this paper, we propose applying GMRES to $ \nba \nbc \nba^\trans \bz = \bb $, where $ \nbc \in \rnn $ is 
symmetric positive definite. This determines 
a least squares solution $ \bx = \nbc \nba^\trans \bz$ of $ \nba \bx = \bb $ without breakdown for 
arbitrary (singular) matrix $\nba \in \rnn$ and $\bb \in \rn$.
To make the method numerically stable, we propose using the pseudoinverse with an appropriate 
threshold parameter to suppress the influence of tiny singular values when solving 
the severely ill-conditioned Hessenberg systems which arise in the Arnoldi process of 
GMRES when solving inconsistent range-symmetric systems.
Numerical experiments show that the method taking $\nbc$ to be the identity matrix and the inverse matrix of 
the diagonal matrix whose diagonal elements are the diagonal elements of $\nba^{\trans}\nba$ gives 
a least squares solution even when $\nba$ is not range-symmetric, including the case when 
$ {\rm {\textmd {index}}}(\nba) >1$.}
\\ \\
{\bf Key words.}
GMRES method; Right preconditioning; Range-symmetric; GP singular systems; Index; Pseudoinverse.
\\ \\
{\bf MSC codes.} 65F08, 65F10, 15A06, 15A09

\section{Introduction}\label{sec1}

The Generalized Minimal Residual (GMRES) method \cite{SS} is a robust and efficient Krylov subspace 
iterative method for systems of linear equations
\begin{equation}\label{lineq}
A \bx = \bb
\end{equation}
 where $ A \in \rnn$ is nonsingular and may be nonsymmetric, and $\bx, \bb \in \rn$. 

Abstractly, GMRES begins with an initial approximate solution $ \bx_0 \in \rn$ and initial residual $ \br_0 = \bb - A \bx_0 $ and 
characterizes the $k$th approximate solution as $\bx_k = \bx_0 + \bz_k$, where $\bz_k$ solves
\begin{equation}
 \min_{ \bz \in {\cal K}_k } \| \bb - A ( \bx_0 + \bz ) \|_2 
  = \min_{ \bz \in {\cal K}_k } \| \br_0 - A \bz \|_2 .
 \label{GMRES}
\end{equation}
Here, ${\cal K}_k$ is the $k$th Krylov subspace determined by $A$ and $\br_0$, defined by
\[ {\cal K}_k \equiv {\rm span} \{ \br_0, A\br_0, \ldots, A^{k-1} \br_0 \}. \]

The implementation given in \cite{SS} is as follows.
\\ \\
{\bf Algorithm 1: GMRES}\\ \\
Choose $\bx_0$. 
$ \br_0 = \bb - A \bx_0 $ 
$ \bv_1 = \br_0 / ||\br_0||_2 $ \\
For $ j = 1, 2, \cdots $ until satisfied do \\
\hspace{5mm} $ h_{i,j}=(\bv_i,A \bv_j )\hspace{4mm}(i=1,2,\ldots,j) $ \\ 
\hspace{5mm}
$ {\displaystyle \hat{\bv}_{j+1} = A \bv_j - \sum_{i=1}^j h_{i,j} \bv_i } $ \\
\hspace{5mm} $ h_{j+1,j} =||\hat{\bv}_{j+1} ||_2 $.
\hspace{4mm}If $ h_{j+1,j} =0, $ goto $ \ast $. \\
\hspace{5mm} $ \bv_{j+1} = \hat{\bv}_{j+1} / h_{j+1,j} $ \\
End do \\
$ \ast \, k:=j $ \\
Form the approximate solution \\ 
\hspace{5mm} $ \bx_k = \bx_0 + [ \bv_1,\ldots,\bv_k] \by_k $ \\
where $ \by = \by_k $ minimizes $  ||\br_k||_2 = ||\beta\be_1-\overline{H}_k \by ||_2 . $
\begin{equation}
\nonumber
\end{equation}

Here, $\vector{r}_{k} = \vector{b} - A\vector{x}_{k}$, and $ \overline{H}_k = [ h_{i,j} ] \in {\bf R}^{ (k+1) \times k}$ is a 
Hessenberg matrix, i.e., $h_{i,j}=0$ for $i>j+1$. \, \,
$\beta={||\br_0 ||_2} $ \, and \, $ \be_1 = [1,0,\ldots,0]^\trans \in {\bf R}^{k+1} $.

There are two kinds of definitions for the breakdown of GMERS.
One definition is that
GMRES breaks down at the $k$th iteration when 
${\rm dim}A({\cal K}_k) < {\rm dim}{\cal K}_k$ or ${\rm dim}{\cal K}_k < k$ \cite{BW}.
The other definition is that GMRES breaks down at the $k$th iteration when $ h_{k+1,k} = 0$ in Algorithm 1.
The equivalence between these definitions for the breakdown of GMRES is discussed in \cite{MH}. 
In this paper, we will use the latter definition.

When $ A $ is singular, one may still apply GMRES to the least squares problem
\begin{equation}
 \min_{\bx \in \rn} \| \bb - A \bx \|_2 .
 \label{lstsq}
\end{equation} 
However, Brown and Walker \cite{BW} (Theorem 2.4) showed that GMRES determines a least squares solution of (\ref{lstsq}) without 
breakdown for all $ \bb $ and initial approximate solution $ \bx_0 $ if and only if $ A $ is range-symmetric (EP), 
that is $ \ran(A)=\ran(A^\trans)$, where $ \ran(A) $ denotes the range space of $ A $. (See also \cite{HS} for an 
alternative proof.)

In order to overcome this problem, Reichel and Ye \cite{RY} proposed the breakdown-free GMRES, which is designed to expand 
the solution Krylov subspace when GMRES breaks down. However, they do not give a theoretical justification that 
this method always works.

On the other hand, for the least squares problem 
\begin{equation}
 \min_{\bx \in \rn} \| \bb - A \bx \|_2
 \label{lstsq2}
\end{equation} 
where $ A \in \rmn $, Hayami, Yin and Ito \cite{HYI} proposed the AB-GMRES method which applies GMRES to 
\begin{equation}
 \min_{\bz \in \Rm} \| \bb -AB \bz \|_2 
 \label{AB}
\end{equation}
where $ B \in \rnm $ satisfies $ \ran (AB) = \ran (A) $.

If we let $ m=n $, and $ B=CA^\trans $, where $ C \in \rnn $ is symmetric positive-definite, then $ \ran(A) = \ran (AB) $
 holds (\cite{HYI}, 
Lemma 3.4).
Note also that $ \ran ( (AB)^\trans )= \ran (AB)$, i.e. $ AB $ becomes range-symmetric, so that the AB-GMRES determines a least 
squares solution of (\ref{AB}) for arbitrary $ \bb \in \Rm$. Note also that $B=CA^\trans$ can be regarded as a preconditioner.

However, when $ \bb \notin \ran (A)$, even when $\ran ( A^\trans )= \ran (A) $, the least squares problem (\ref{GMRES}) 
in GMRES may 
become dangerously ill-conditioned before a least squares solution of  (\ref{lstsq}) is obtained (\cite{BW}, Theorem 2.5).

In order to overcome this problem, we proposed using pseudoinverse to suppress the influence of tiny singular values when solving 
the severely ill-conditioned Hessenberg system which arises when solving inconsistent range-symmetric systems \cite{SHL}. 
In this paper, we further optimize this method using an appropriate threshold for the pseudoinverse and apply the method for 
solving arbitrary singular systems which may be inconsistent. Numerical experiments with inconsistent systems with 
non-range-symmetric 
$A$ with 
index($A$) $\geq$ 1, show the validity of the proposed method.

The rest of the paper is organized as follows. In section 2, we explain the definition of index, GP, EP matrices and
their applications.
In section 3, we review the theory for GMRES on singular systems. 
In section 4, we propose the right preconditioned GMRES for arbitrary singular systems. In section 5, we propose using 
pseudoinverse with an appropriate threshold to solve the severely ill-conditioned Hessenberg systems arising from the Arnoldi 
process in GMRES. We present numerical experiment results of the proposed method for GP systems in section 6,
and for index 2 systems in section 7.
In section 8, we compare the proposed method with the left preconditioned GMRES applied to the first kind normal equation 
which is equivalent to the least squares problem. Section 9 concludes the paper.

\section{GP and EP matrices and their applications}
index($A$) of $A \in \rnn$ denotes the smallest nonnegative interger $i$  such 
that $\rank(A^{i}) = \rank(A^{i+1})$ \cite{CM}
(Definition 7.2.1) and it is thus equal to the size of the largest Jordan block 
corresponding to the zero eigenvalue of $A$ \cite{OL} (section 3). 
If $\ran(A) \cap \nul(A) = {\vector{0}}$,
 $A$ is called a GP (group) matrix \cite{HaSp} (section 1). 
(A group matrix is a matrix which has a group inverse.)  
If $A$ is singular, $A$ is GP if and only if ${\rm index}(A) = 1$.
The GP matrix arises, for example, from the finite difference discretization of a convection diffusion 
equation with Neumann boundary condition 
\begin{eqnarray}\label{neueq}
\Delta u + d \frac{\partial u}{\partial x_{1}} & = & x_{1} + x_{2},~~\vector{x}=(x_{1}, x_{2}) \in \Omega \equiv [0,1] \times [0,1],  \\
\frac {\partial u(\vector{x})}{\partial \nu} & = & 0 ~~for~~ \vector{x} \in \partial \Omega ,
\end{eqnarray}
as in \cite{BW}.

Assume $A$ arises from the finite difference discretization of the above equation. 
Then, $\nul(A^{\trans}) \neq \nul(A)$ and 
$\ran(A) \cap \nul(A) = \{\vector{0}\}$ hold. Then, A is a GP matrix.

GP matrices also arise in the analysis of ergodic homogeneous finite Markov chains \cite{FH}.

When $\ran(A^\trans) = \ran(A)$, $A$ is called range-symmetric or EP (Equal Projectors) \cite{CM}. Note that, since
$\nul(A^\trans) = \ran(A)^{\perp}$, $\ran(A^\trans) = \ran(A)
\Leftrightarrow \nul(A^\trans) = \nul(A) \Leftrightarrow \ran(A) \perp \nul(A)$.
Hence, $\ran(A^\trans) = \ran(A) \Rightarrow \ran(A) \cap \nul(A) = \{\vector{0}\}$.
That is, an EP matrix is a GP matrix. An EP matrix arises, for instance, from the finite difference
approximation
of the convection diffusion equation (\ref{neueq}) with a periodic boundary condition 
\begin{eqnarray*}
u(x_{1}, 0) = u(x_{1}, 1), ~~x_{1} \in [0,1], \\
u(0, x_{2}) = u(1, x_{2}), ~~x_{2} \in [0,1],
\end{eqnarray*}
as in \cite{BW}.

\section{Convergence theory of GMRES}
Consider the system of linear equations (\ref{lineq})
and the least squares problem (\ref{lstsq}).
(\ref{lineq}) is called consistent 
when $\vector{b}\in R(A)$, and inconsistent otherwise.
Brown and Walker \cite{BW} showed that GMRES determines a least squares solution of 
(\ref{lstsq}) without breakdown for arbitrary $ \bb, \bx_0 \in \rn $ if and only if $A$ is range symmetric (EP), 
i.e. $\ran(A^\trans) = \ran(A)$, where $ A \in \rnn $ may be singular and $\bb $ may not be  in the range space $\ran(A)$ of $A$.

When $\ran(A^\trans) \ne \ran(A)$, there exist $\vector{x}_{0}$ and $\vector{b}$ such that 
GMRES breaks down without giving a least squares solution of (\ref{lstsq}) (\cite{BW, HS}).

In \cite{BW}, it was also pointed out that even if $\ran(A^\trans) = \ran(A)$, if $\vector{b} \notin \ran(A)$,
the least squares problem (\ref{GMRES}) becomes very ill-conditioned. (The condition number of the Hessenberg matrix
arising in each iteration of GMRES becomes very large.)

If $A$ is a GP matrix, GMRES determines a solution of (\ref{lineq}) for all $\vector{b}\in \ran(A)$ and 
for all initial vector $\vector{x}_{0}$ (\cite{BW}, Theorem 2.6). Moreover,  
GMRES determines a solution of (\ref{lineq}) for all $\vector{b}\in \ran(A)$ and 
for all initial vector $\vector{x}_{0}$ if and only if $A$ is a GP matrix (\cite{HS}, Theorem 2.8). 

Even if $A$ is a GP matrix and $\vector{b} \in \ran(A)$, 
it was reported in \cite{MR} that
 the least squares problem (\ref{GMRES})
can become very ill-conditioned before breakdown in finite precision arithmetic.
If ${\rm index}(A) > 1$, there exists a $\vector{b} \in \ran(A)$ such that GMRES breaks down at iteration step 1 without
giving a solution of (\ref{lineq}) (\cite{HS}, Theorem 2.8). 

Hence, we propose GMRES using right preconditioning and pseudoinverse to overcome the difficulty of solving the least squares problems 
(\ref{lstsq}) with coefficient matrices whose index is greater than or equal to 1.

\section{Right preconditioned GMRES for singular systems}
For the least squares problem (\ref{lstsq2}) 
where $ A \in \rmn $, Hayami, Yin and Ito \cite{HYI} proposed the AB-GMRES method which applies GMRES to (\ref{AB})
where
$ B \in \rnm $ satisfies $ \ran (AB) = \ran (A) $.
Note that 
$\displaystyle  \min_{\bz \in \Rm} \| \bb -AB \bz \|_2 = \min_{\bx \in \rn} \| \bb -A \bx \|_2$  holds
for all $\vector{b} \in \rn$ if and only if $ \ran(A) = \ran (AB) $ (\cite{HYI}, Theorem 3.1).
If we let $ m=n $, and $ B=CA^\trans $, where $ C \in \rnn $ is symmetric positive definite, then $ \ran(A) = \ran (AB) $ 
holds (\cite{HYI}, Lemma 3.4).
Note also that $\ran ((AB)^\trans) = \ran(AB) $, i.e. $ AB $ becomes 
range-symmetric, so that the AB-GMRES determines a least 
squares solution of (\ref{AB}) for arbitrary $ \bb \in \Rm$. Note also that $B=CA^\trans$ can be regarded as a right preconditioner.
Using this right preconditioning, we transform (\ref{lstsq}), where index($A$) is greater than or equal to 1, 
to a least squares problem (\ref{AB}) with $ B=CA^\trans $, where $ C \in \rnn $ is symmetric positive definite.
Since $\ran(A) = \ran (AB)$ holds, GMRES determines a least squares solution $\vector{z}$ of (\ref{AB}) without breakdown
and $\vector{x} = B\vector{z}$ is a least square solution of ($\ref{lstsq2}$).
However, when $ \bb \notin \rn $, even when $\ran ( A^\trans ) = \ran(A)$, the least squares problem (\ref{GMRES}) in GMRES may 
become dangerously ill-conditioned before a least squares solution of  (\ref{lstsq}) is obtained (\cite{BW}, Theorem 2.5).
Thus, in the next section, we propose using the pseudoinverse
with a threshold parameter to overcome this problem.

Let $A\in \rnn$ and $B = CA^{\trans}$, where $C \in \rnn$ is symmetric positive definite.
Let the singular values of $AC^{\frac{1}{2}}$ be $\sigma_{i}(1\leq i \leq n)$. 
Hayami, Yin, and Ito showed the following. \cite{HYI}
\begin{theorem}\label{thab}
The residual $\vector{r} = \vector{b} - A\vector{x}$, achieved by the $k$th step of AB-GMRES satisfies 
\begin{eqnarray*}
 \|\vector{r}_{k}{\mid}_{\ran(A)}\|_{2} \leq 2\bigg(\frac{\sigma_{1} - \sigma_{r}}{\sigma_{1} + \sigma_r}\bigg)^{k} 
\|\vector{r}_{0}{\mid}_{\ran(A)}\|_{2}.
\end{eqnarray*}
\end{theorem}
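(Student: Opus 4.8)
The plan is to reduce the convergence bound for AB-GMRES to the classical GMRES residual bound applied to the range-symmetric matrix $AB = ACA^\trans$, and then to track what happens when we restrict the residual to $\ran(A)$. First I would observe that AB-GMRES is exactly GMRES applied to $M\bz = \bb$ with $M = ACA^\trans$, so the $k$th residual $\br_k = \bb - M\bz_k = \bb - A\bx_k$ lies in the affine space $\br_0 + M\mathcal{K}_k(M,\br_0)$, and it minimizes $\|\br_0 - M\bz\|_2$ over $\bz \in \mathcal{K}_k(M,\br_0)$. Equivalently, $\br_k = p_k(M)\br_0$ where $p_k$ ranges over polynomials of degree $\le k$ with $p_k(0) = 1$, and $\br_k$ is the one minimizing $\|p_k(M)\br_0\|_2$.

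The key step is to change variables to symmetrize. Write $M = ACA^\trans = (AC^{1/2})(AC^{1/2})^\trans =: GG^\trans$ where $G = AC^{1/2}$ has singular values $\sigma_i$. Then $M$ is symmetric positive semidefinite with eigenvalues $\sigma_i^2$. Crucially, $\ran(M) = \ran(G) = \ran(AC^{1/2}) = \ran(A)$, and on $\ran(M)$ the matrix $M$ acts as a symmetric positive definite operator with eigenvalues among $\{\sigma_1^2 \ge \cdots \ge \sigma_r^2 > 0\}$ (the nonzero ones). Now decompose $\br_0 = \br_0|_{\ran(A)} + \br_0|_{\ran(A)^\perp}$ orthogonally; since $\ran(A)^\perp = \nul(A^\trans) = \nul(M)$ (using range-symmetry of $M$, or just $\nul(GG^\trans)=\nul(G^\trans)$), the component in $\ran(A)^\perp$ is annihilated by $M$, so $p_k(M)\br_0 = p_k(0)\br_0|_{\ran(A)^\perp} + p_k(M)\br_0|_{\ran(A)} = \br_0|_{\ran(A)^\perp} + p_k(M)\br_0|_{\ran(A)}$, and these two pieces are orthogonal. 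Hence $\br_k|_{\ran(A)} = p_k(M)\br_0|_{\ran(A)}$ for the GMRES-optimal $p_k$, and therefore for \emph{any} admissible polynomial $q$ with $q(0)=1$ we get $\|\br_k|_{\ran(A)}\|_2 \le \|q(M)\br_0|_{\ran(A)}\|_2$ — this uses that GMRES minimizes the full residual norm, which splits orthogonally, so minimizing the whole thing controls the $\ran(A)$-part by any competitor polynomial evaluated against just that part.

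Then I would finish with the standard Chebyshev argument: restricting $M$ to the subspace $\ran(A)$ where it is symmetric positive definite with spectrum in $[\sigma_r^2, \sigma_1^2]$, choose $q$ to be the degree-$k$ Chebyshev polynomial for the interval $[\sigma_r^2, \sigma_1^2]$ normalized so $q(0)=1$. The spectral theorem gives $\|q(M)\br_0|_{\ran(A)}\|_2 \le \max_{t \in [\sigma_r^2,\sigma_1^2]} |q(t)| \cdot \|\br_0|_{\ran(A)}\|_2$, and the classical estimate for the shifted-and-scaled Chebyshev polynomial yields $\max_{t\in[\sigma_r^2,\sigma_1^2]}|q(t)| \le 2\left(\frac{\sqrt{\sigma_1^2} - \sqrt{\sigma_r^2}}{\sqrt{\sigma_1^2}+\sqrt{\sigma_r^2}}\right)^k = 2\left(\frac{\sigma_1 - \sigma_r}{\sigma_1 + \sigma_r}\right)^k$, which is exactly the claimed bound. (The appearance of $\sigma_i$ rather than $\sigma_i^2$ comes precisely from the $\sqrt{\kappa}$-type improvement in the Chebyshev bound, here expressed via the eigenvalues $\sigma_i^2$ of $M$.)

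The main obstacle — really the only subtle point — is the orthogonal-splitting argument that lets us pass from "GMRES minimizes $\|p_k(M)\br_0\|_2$" to "$\|\br_k|_{\ran(A)}\|_2 \le \min_q \|q(M)\br_0|_{\ran(A)}\|_2$". One must check carefully that the minimizing polynomial for the full norm is also admissible for (and hence an upper bound on the minimum over) the restricted problem; this works because the $\ran(A)^\perp$-component of $p(M)\br_0$ is the fixed vector $\br_0|_{\ran(A)^\perp}$ independent of the choice of $p$ (as long as $p(0)=1$), so minimizing the total squared norm $\|\br_0|_{\ran(A)^\perp}\|_2^2 + \|p(M)\br_0|_{\ran(A)}\|_2^2$ is the same as minimizing the second term alone. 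Everything else is the textbook GMRES/Chebyshev convergence proof transported to the positive definite compression of $M$ on $\ran(A)$.
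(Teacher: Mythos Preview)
Your argument is correct. However, the paper does not actually prove this theorem: it is stated with the attribution ``Hayami, Yin, and Ito showed the following \cite{HYI}'' and no proof is given in the present paper. So there is nothing in the paper to compare your proof against. That said, your approach---writing $ACA^\trans = (AC^{1/2})(AC^{1/2})^\trans$, orthogonally splitting $\br_0$ along $\ran(A)\oplus\ran(A)^\perp$, observing that the $\ran(A)^\perp$-component is invariant under any $p(M)$ with $p(0)=1$ so that GMRES on the full vector effectively minimizes only the $\ran(A)$-part, and then invoking the Chebyshev bound for the symmetric positive definite restriction with spectrum in $[\sigma_r^2,\sigma_1^2]$---is exactly the standard derivation and is what one finds in the cited source. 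The orthogonal-splitting step you flagged as the only subtle point is handled correctly.
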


Here, let the singular values of $A$ be $\sigma_{i}(A) (i=1,..,r) ({\rm rank}(A) = r)$. 
Let $\kappa(A) = \frac{\sigma_{1}(A)}{\sigma_r(A)}$. Then, from Theorem \ref{thab}, the following theorem holds.
\begin{theorem}\label{atrab}
The residual $\vector{r} = \vector{b} - A\vector{x}$, achieved by the $k$th step of AB-GMRES satisfies 
\begin{eqnarray*}
\|A^\trans\vector{r}_{k}\|_{2} &\leq&
2\kappa(A)\bigg(\frac{\sigma_{1} - \sigma_{r}}{\sigma_{1} + \sigma_r}\bigg)^{k} 
\|A^{\trans}\vector{r}_{0}\|_{2}
\end{eqnarray*}
\end{theorem}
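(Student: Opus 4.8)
The plan is to deduce Theorem~\ref{atrab} from Theorem~\ref{thab} by controlling the map between the quantity $\|A^\trans \vector{r}_k\|_2$ and the restricted residual norm $\|\vector{r}_k|_{\ran(A)}\|_2$ from both sides. The key observation is that $A^\trans$ annihilates $\nul(A^\trans) = \ran(A)^\perp$, so only the component of $\vector{r}_k$ in $\ran(A)$ matters when we apply $A^\trans$. Concretely, write $\vector{r}_k = \vector{r}_k|_{\ran(A)} + \vector{r}_k|_{\ran(A)^\perp}$; then $A^\trans \vector{r}_k = A^\trans (\vector{r}_k|_{\ran(A)})$. Since $\vector{r}_k|_{\ran(A)} \in \ran(A)$, and the restriction of $A^\trans$ to $\ran(A)$ has smallest and largest (nonzero) singular values $\sigma_r(A)$ and $\sigma_1(A)$, we get the two-sided bound
\begin{equation*}
\sigma_r(A)\,\|\vector{r}_k|_{\ran(A)}\|_2 \;\leq\; \|A^\trans \vector{r}_k\|_2 \;\leq\; \sigma_1(A)\,\|\vector{r}_k|_{\ran(A)}\|_2 .
\end{equation*}

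With this in hand the proof is a short chain of inequalities. First I would apply the upper bound at step $k$:
\begin{equation*}
\|A^\trans \vector{r}_k\|_2 \;\leq\; \sigma_1(A)\,\|\vector{r}_k|_{\ran(A)}\|_2 .
\end{equation*}
Then I would insert Theorem~\ref{thab}:
\begin{equation*}
\|\vector{r}_k|_{\ran(A)}\|_2 \;\leq\; 2\Bigl(\tfrac{\sigma_1 - \sigma_r}{\sigma_1 + \sigma_r}\Bigr)^{k}\|\vector{r}_0|_{\ran(A)}\|_2 .
\end{equation*}
Finally I would apply the lower bound at step $0$ in the form $\|\vector{r}_0|_{\ran(A)}\|_2 \leq \sigma_r(A)^{-1}\|A^\trans \vector{r}_0\|_2$. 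Combining these three and using $\kappa(A) = \sigma_1(A)/\sigma_r(A)$ yields exactly
\begin{equation*}
\|A^\trans \vector{r}_k\|_2 \;\leq\; 2\,\kappa(A)\Bigl(\tfrac{\sigma_1 - \sigma_r}{\sigma_1 + \sigma_r}\Bigr)^{k}\|A^\trans \vector{r}_0\|_2 ,
\end{equation*}
which is the claim.

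The one point that needs care --- and which I expect to be the only real obstacle --- is justifying the two-sided singular-value bound on $A^\trans$ restricted to $\ran(A)$, and in particular clarifying which $\sigma_i$ appear. The $\sigma_i$ in Theorem~\ref{thab} are the singular values of $AC^{1/2}$, whereas $\kappa(A)$ and the bound $\sigma_r(A)\|\cdot\| \le \|A^\trans\cdot\|\le \sigma_1(A)\|\cdot\|$ on $\ran(A)$ involve the singular values of $A$ itself; I would need to check that the statement intends $\sigma_1,\sigma_r$ to denote $\sigma_1(A),\sigma_r(A)$ here (the sentence preceding the theorem suggests so), or otherwise track the relation between the two sets of singular values via $C$. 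Granting that, the bound follows from the SVD of $A$: if $A = U\Sigma V^\trans$ with $\Sigma = \mathrm{diag}(\sigma_1(A),\dots,\sigma_r(A),0,\dots,0)$, then for $\vector{y}\in\ran(A) = \spn\{u_1,\dots,u_r\}$ we have $\|A^\trans \vector{y}\|_2^2 = \sum_{i=1}^r \sigma_i(A)^2 (u_i^\trans \vector{y})^2$, and $\sum_{i=1}^r (u_i^\trans\vector{y})^2 = \|\vector{y}\|_2^2$, giving the claimed sandwich. The rest is routine.
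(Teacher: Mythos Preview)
Your proposal is correct and follows essentially the same route as the paper: decompose $\vector{r}_k$ along $\ran(A)\oplus\ran(A)^\perp$, use $A^\trans\vector{r}_k = A^\trans(\vector{r}_k|_{\ran(A)})$, bound above by $\sigma_1(A)\|\vector{r}_k|_{\ran(A)}\|_2$, invoke Theorem~\ref{thab}, and bound $\|\vector{r}_0|_{\ran(A)}\|_2$ below via $\|A^\trans\vector{r}_0\|_2\ge\sigma_r(A)\|\vector{r}_0|_{\ran(A)}\|_2$. Your reading of the notation is also right: the $\sigma_1,\sigma_r$ in the convergence factor remain those of $AC^{1/2}$ from Theorem~\ref{thab}, while $\kappa(A)=\sigma_1(A)/\sigma_r(A)$ uses the singular values of $A$; the paper's proof keeps exactly this distinction, and your SVD argument for the two-sided bound on $A^\trans$ restricted to $\ran(A)$ is in fact more explicit than what the paper provides.
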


\begin{proof}
\begin{eqnarray*}
A^\trans \vector{r}& =& A^\trans(\vector{r}{\mid}_{\ran(A)} + \vector{r}{\mid}_{{\ran(A)}^{\perp}}) \\
                   & = & A^\trans\vector{r}{\mid}_{\ran(A)} + A^\trans\vector{r}{\mid}_{\nul(A^{\trans})} \\
                   & = & A^\trans\vector{r}{\mid}_{\ran(A)}
\end{eqnarray*}
From Theorem \ref{thab}, 
\begin{eqnarray*}
\|A^\trans\vector{r}_{k}\|_{2}   & = & \|A^\trans\vector{r}_{k}{\mid}_{\ran(A)} \|_{2} \\  
& \leq & \sigma_{1}(A^{\trans}) \times 2\bigg(\frac{\sigma_{1} - \sigma_{r}}{\sigma_{1} + \sigma_{r}}\bigg)^{k} 
\|\vector{r}_{0}{\mid}_{\ran(A)}\|_{2} \\
 & = & \sigma_{1}(A) \times 2\bigg(\frac{\sigma_{1} - \sigma_{r}}{\sigma_{1} + \sigma_{r}}\bigg)^{k} 
 \|\vector{r}_{0}{\mid}_{\ran(A)}\|_{2}
\end{eqnarray*}
On the other hand, 
\begin{eqnarray*}
\|A^{\trans}\vector{r}_{0}\|_{2} & = &   \|A^{\trans}\vector{r}_{0}{\mid}_{\ran(A)}\|_{2}\\
                             & \geq & \sigma_{r}(A)\|\vector{r}_{0}{\mid}_{\ran(A)}\|_{2}
\end{eqnarray*}
Then,
\begin{eqnarray*}
\|A^\trans\vector{r}_{k}\|_{2} &\leq&
2\kappa(A)\bigg(\frac{\sigma_{1} - \sigma_{r}}{\sigma_{1} + \sigma_r}\bigg)^{k} 
\|A^{\trans}\vector{r}_{0}\|_{2}
\end{eqnarray*}
holds.
\end{proof}

\section{GMRES using pseudoinverse and reorthogonalization}
When \\ $\vector{b} \notin \ran(A)$, even when $\ran (A) = \ran (A^\trans)$,
the GMRES solution in finite precision arithmetic may become inaccurate when the least squares problem (\ref{GMRES}) in GMRES 
becomes dangerously ill-conditioned before a least squares solution of (\ref{lstsq}) is obtained.
Thus, we proposed the GMRES using pseudoinverse in order to improve the accuracy of the GMRES solution \cite{SHL}.
Let $H_{k+1,k}$ be the Hessenberg matrix arising in the GMRES Algorithm 1.

\vspace{6pt}
\hspace{-13pt}{\bf Algorithm 2 : GMRES using pseudoinverse (essence) }
\vspace{4pt}
\\1: Compute $y = {H_{k+1,k}}^{\dag}\beta\vector{e}_{1}$ where
${H_{k+1,k}}^{\dag}$ is the pseudoinverse of $H_{k+1,k}$.
\\2: Compute the solution $\vector{x}_{k}=\vector{x}_{0}+V_{k}y$.

Here,  $y = {H_{k+1,k}}^{\dag}\beta\vector{e}_{1}$ is the minimum-norm solution 
of\\
$\min_{\vector{y}_{k} \in  \mathbb{R}^{k}}\|\beta\vector{e}_{1} - H_{k+1,k}\vector{y}_{k}\|_{2}$ \cite{Bjorck}.  

${H_{k+1,k}}^{\dag}$ is defined as follows.

\vspace{6pt}
\hspace{-13pt}{\bf Definition 3 : Pseudoinverse of $B$}
\vspace{4pt}
\\1: Let the singular value decomposition of $B$ be $B = U\Sigma V^{\rm T}$ where $U\in {\rm \mathbb{R}}^{m\times m}$ and 
$V \in {\rm \mathbb{R}}^{n\times n}$ are orthogonal matrices, $\Sigma \in {\rm \mathbb{R}}^{m\times n}$ is
the diagonal matrix whose diagonal elements are the singular values
$\sigma_{1} \geq ...\geq \sigma_{r} > 0$ of $B$, $r = {\rm rank}B$, $\sigma_{i} = 0$, $i = r + 1, ..., \min\{m,n\}$. 
\\2: Then, $B^{\dag} = V{{\Sigma}^{\dag}}U^{\rm T}$.
Here, 
$\Sigma^{\dag} \in {\rm \mathbb{R}}^{n\times m}$ is the diagonal matrix whose diagonal elements are
${\sigma_1}^{-1} \leq ... \leq {\sigma_r}^{-1}$, ${\sigma_{i}}^{\dag} = 0$, $i = r + 1, ..., \min\{m,n\}$.

We use pinv in MATLAB for computing the pseudoinverse.
pinv for the matrix $B \in {\rm \mathbb{R}}^{m\times n}$ is defined as follows.

\vspace{6pt}
\hspace{-13pt}{\bf Algorithm 4 : pinv in MATLAB }
\vspace{4pt}
\\1:  Let the singular value decomposition of $B$ be $B = U{\Sigma}V^{\rm T}$ as above.
\\2: Set the tolerance value $tol$. The diagonal elements of $\Sigma$ which are smaller than $tol$ are replaced by zero
to give
\begin{eqnarray*}\label{eq:Asvd1}
\left[
\begin{array}{cc}
\Sigma_{1} & 0 \\
0                                 &  0                            
\end{array}
\right].
\end{eqnarray*}
Then, let
\begin{eqnarray}\label{eq:Asvd}
\tilde{B} := [U_{1},U_{2}]\left[
\begin{array}{cc}
\Sigma_{1} & 0 \\
0                                 &  0                            
\end{array}
\right][V_{1},V_{2}]^{\rm T}
=U_{1}{{\Sigma}_{1}}{V_{1}}^{\rm T}.
\end{eqnarray} 
where $U=[U_{1},U_{2}], ~V=[V_{1},V_{2}]$.
\\3: $\tilde{B}^{\dag} :=V_{1}{\Sigma_{1}}^{-1}{U_{1}}^{\rm T}$.

In \cite{SHL}, we used the default value of the tolerance value $tol$ which is \\
$\max\{m,n\}\times {\rm eps}(\|B\|_{2})$ for $B \in {\rm \mathbb{R}}^{m\times n}$.
Here,
\begin{itemize}
\item $d = {\rm eps}(x)$, where $x$ has data type single or double, returns the positive 
distance $d$ from $|x|$ to the next larger floating-point number of the same precision as $x$.   
\end{itemize}

$\max\{m,n\}\times {\rm eps}(\|B\|_{2})$ is called the numerical rank of $B$ \cite{Bjorck}.

Here, let $\sigma_{1}(H_{k+1,k})$ be the largest singular value of $H_{k+1,k}$, and 
$\sigma_{k}(H_{k+1,k})$ be the smallest singular value of $H_{k+1,k}$.
In this paper, we use $\alpha \times \sigma_{1}(H_{k+1,k}) \\
(0 < \alpha \ll 1)$ as the tolerance value $tol$ 
to improve the convergence of GMRES using pseudoinverse with the default value of the tolerace $tol$.\\

In the numerical experiments, we used $10^{-6}, 10^{-8}, 10^{-10}, 10^{-11}, 10^{-12}$ for $\alpha$.

Here, we define $\vector{r}_{j} = \|\vector{b} - A\vector{x}_{j}\|_{2}$ where $\vector{x}_j$ is the $j$th iterate of 
GMRES. 
It was observed in \cite{SHL} that,
when $\vector{b} \notin \ran(A)$,   
$\displaystyle \frac{\|A^{\rm T}\vector{r}_{j}\|_{2}}{\|A^{\rm T}\vector{b}\|_{2}}$ of 
GMRES using pseudoinverse sometimes oscillates. This is probably because the column vectors of $V_{k}$ lose orthogonality.
Thus, we proposed reorthogonalizing the column vectors of $V_{k}$ in order to
prevent the oscillation in \cite{Liao,SHL}.

The reorthogonalized Arnoldi process in the GMRES is as follows.

\vspace{6pt}
\hspace{-13pt}{\bf Algorithm 5 : Reorthogonalized Arnoldi process in the GMRES}
\vspace{4pt}
\\1:~~$h_{i,j} = (\vector{v}_{i}, A\vector{v}_{j})~(i=1,2,...j)$ 
\\2:~~$\vector{w}=A\vector{v}_{j} - {\displaystyle {\sum_{i=1}^j}}h_{i,j}\vector{v}_{i}$
\\3:~~$\hat{\vector{v}}_{j+1} = \vector{w} - {\displaystyle {\sum_{i=1}^j}}(\vector{w}, \vector{v}_{i})\vector{v}_{i}$ 
\\4:~~$h_{j+1,j}=\|\hat{\vector{v}}_{j+1}\|_{2}$
\\5:~~If $h_{j+1,j} \ne 0$, $\vector{v}_{j+1} = \frac{\hat{\vector{v}}_{j+1}}{h_{j+1,j}}$
\vspace{4pt}

In Algorithm 4, line 3 is the reorthogonalization part.

\section{Numerical experiments for GP systems}\label{NumGP}
In this section, we evaluate the effectiveness of the right preconditioned GMRES using pseudoinverse 
and reorthogonalization for inconsistent GP systems.
Furthermore, we also evaluate the effectiveness of the right preconditioned GMRES using reorthogonalization for  
consistent GP systems.

To do so, we compare the performance and the convergence when solving
the following three linear systems (\ref{axb}), (\ref{aateq}), (\ref{acateq}) by numerical experiments.
 Here, assume $C \in \rnn$ is symmetric positive definite.
For (\ref{axb}), we use GMRES using reorthogonalization.
For (\ref{aateq}) and (\ref{acateq}), 
we use GMRES using reorthogonalization, and GMRES using pseudoinverse and reorthogonalization.
\begin{eqnarray}
A\vector{x} &= &\vector{b} \label{axb}\\
AA^{\trans}\vector{z} & = & \vector{b},~\vector{x}=A^{\trans}\vector{z} \label{aateq} \\
ACA^{\trans}\vector{z} & = & \vector{b},~\vector{x}=CA^{\trans}\vector{z} \label{acateq}
\end{eqnarray}

GMRES for (\ref{aateq}) is the AB-GMRES with $B = A^{\trans}$ and GMRES for (\ref{acateq})
is the AB-GMRES with $B = CA^{\trans}$ \cite{HYI}. AB-GMRES means the right preconditioned GMRES.

The initial iterate $\vector{x}_{0}$ is set to $\vector{0}$.
Let $\vector{r}_{k}=\vector{b}-A\vector{x}_{k}$ where $\vector{x}_{k}$ is the approximate solution 
at the $k$th step.
We judge the convergence of each method by $\displaystyle \frac{\|A^{\rm T}\vector{r}_{k}\|_{2}}{\|A^{\rm T}\vector{b}\|_{2}}$
for inconsistent systems, and $\displaystyle \frac{\|\vector{r}_{k}\|_{2}}{\|\vector{b}\|_{2}}$
for consistent systems.

Computation except for Algorithm 1 of GMRES using pseudoinverse
were done on a PC with Intel(R) Core(TM) i7-7500U 2.70 GHz CPU, Cent OS and double precision
floating arithmetic.
GMRES was coded in Fortran 90 and compiled by Intel Fortran.
The method to code GMRES using pseudoinverse is as follows.
Here, $H_{i,j}$ is the Hessenberg matrix and all the column vectors of $V_{k}$ form an orthonormal basis
generated by the Arnoldi process.
\begin{enumerate}
\item $H_{i,j}$ and $V_{k}$ are computed by Fortran 90.
\item Write $H_{i,j}$ and $V_{k}$ into the ascii formatted files by Fortran 90.
\item Read the files of $H_{i,j}$ and $V_{k}$ in MATLAB.
\item The pseudoinverse 
${\tilde{H}_{i,j}}^{~~\dag}$
and the solution 
$\vector{x}_{k}=\vector{x}_{0}+V_{k}{{\tilde{H}}_{k+1,k}}^{~~~~~~\dag}\beta\vector{e}_{1}$ 
are computed using pinv of MATLAB.
\end{enumerate} 
The version of MATLAB is R2018b.

The purpose of numerical experiments in this paper is to verify
how small the residual of the propsed method can become.
Furthermore, we consider that the estimation of the cost performance 
of the proposed method is the future work.
Therefore, the size of the test matrices in the numerical experiments was kept small.


The GP matrix $A \in {\bf R}^{128 \times 128}$ is as follows.
\begin{eqnarray*}
\left[
\begin{array}{cc}
A_{11} & A_{12} \\
0                                 &  0                            
\end{array}
\right].
\end{eqnarray*}
where $A_{11}, A_{12} \in {\bf R}^{64 \times 64}$.
Here, assume $J_{k}(\lambda) \in {\bf R}^{k \times k} $ is a square matrix of the form
\begin{eqnarray*}
\left[
\begin{array}{cccc}
\lambda & 1 & 0 & 0\\
0      & \lambda & 1 & 0 \\
0      & 0      & ... & 1 \\
0      &  0     & 0   & \lambda\\                            
\end{array}
\right]
\end{eqnarray*}
where $\lambda \in \real$.

\begin{eqnarray*}
A_{11} & = & \left[
\begin{array}{cc}
W & 0 \\
0                                 &  D                            
\end{array}
\right].
\end{eqnarray*}
where $W \in {\bf R}^{32 \times 32}$ is 
\begin{eqnarray*}
\left[
\begin{array}{cccc}
J_{2}(\alpha_{1}) & 0 & 0 & 0 \\
0                 &  J_{2}(\alpha_{2}) & 0 & 0 \\
0                 &        0           & ... & 0 \\  
0                 &        0           & 0 & J_{2}(\alpha_{16})  
\end{array}
\right]
\end{eqnarray*}
and $D$ is diagonal matrix whose $j$th diagonal element is $\beta_{j}$.

Here, $\alpha_{j}(j=1,2,...16) \in \real$, $\beta_{i}(i=1,2,...,32) \in \real$ is
as follows. 
\begin{eqnarray*}
\alpha_{1} = 1 , \alpha_{16} = 10^{-\rho},~ \alpha_{j} = \alpha_{16} + \frac{16-j}{15}(\alpha_{1} - \alpha_{16})\times 0.7^{j-1} \\
\beta_{1} = 1 , \beta_{32} = 10^{-\gamma},~ \beta_{i} = \beta_{32} + \frac{32-i}{31}(\beta_{1} - \beta_{32})\times 0.2^{i-1}
\end{eqnarray*}
Furthermore, $A_{12} \in {\bf R}^{64 \times 64}$ is
\begin{eqnarray*}
\left[
\begin{array}{cccc}
J_{2}(\beta_{1}) & 0 & 0 & 0 \\
0                 &  J_{2}(\beta_{2}) & 0 & 0 \\
0                 &        0           & ... & 0 \\  
0                 &        0           & 0 & J_{2}(\beta_{16})  
\end{array}
\right]
\end{eqnarray*}

For the above matrix $A$, let $C \in \rnn$ be $C = \{{\rm diag}(A^{\trans}A)\}^{-1}$.
Then, $C$ is symmetric positive definite.
In this section, let both $\rho$ and $\gamma$ be $12$.

Fig. \ref{svgp} shows the distribution of the singular values of $A$.
Here, $\sigma_{i}(A) (i=1,2,...,64)$ are the nonzero singular values of $A$.

\begin{figure}[htbp]
\begin{center}
\includegraphics[scale=0.55]{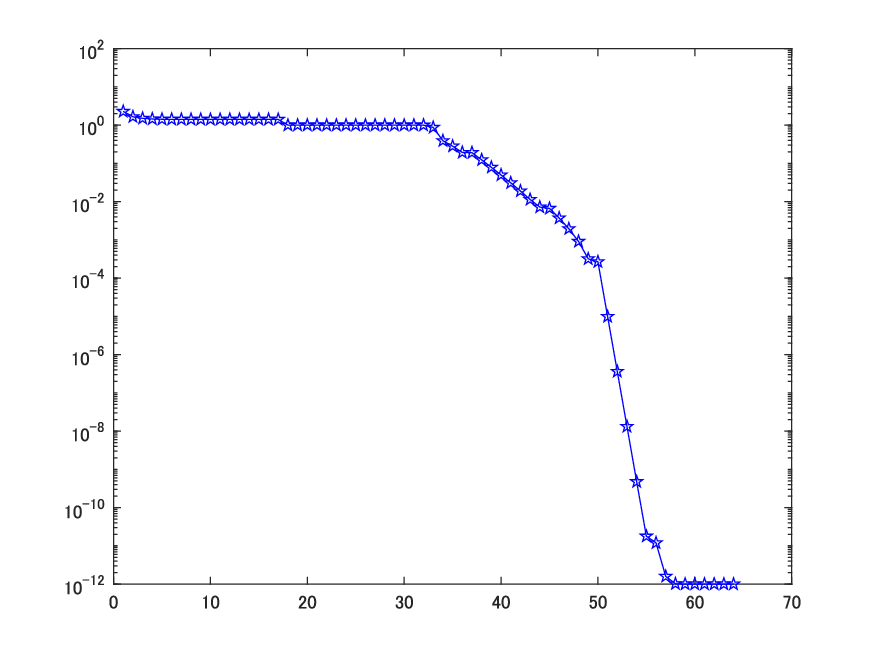}
\caption{Distribution of the singular values $\sigma_{i}(A) (i=1,2,...,64)$ of $A$}
\label{svgp}
\end{center}
\end{figure}

\subsection{Numerical experiments for GP inconsistent systems}
The right hand side vector $\vector{b}$ was set as follows, where $\vector{u}(0,1)$ 
is an $n$ dimensional vector of pseudorandom numbers generated according to the uniform 
distribution over the range (0,1).
\begin{itemize}
\item $\vector{b} = \frac{A \times (1,1,.,1)^{\trans}}{\|A \times (1,1,...,1)^{\trans}\|_{2}}
 + \frac{\vector{u}(0,1)}{\|\vector{u}(0,1)\|_{2}}\times 0.01$
\end{itemize}
Thus, the systems are generically inconsistent.

Here, let $\vector{r} = \vector{b} - A\vector{x}$, where $\vector{x} = A^{\trans} \vector{z}$.
$\displaystyle \min_{\vector{z} \in \rn} \|AA^{\trans}\vector{z} - \vector{b}\|_{2}$
is equivalent to $AA^{\trans}\vector{r} = \vector{0}$.
Furthermore, let $\vector{x} = ACA^{\trans}\vector{z}$. 
$\displaystyle \min_{\vector{z} \in \rn} \|ACA^{\trans}\vector{z} - \vector{b}\|_{2}$
is equivalent to $ACA^{\trans}\vector{r} = \vector{0}$.
Since $\nul(AA^{\trans}) = \nul(ACA^{\trans}) = \nul(A^{\trans})$ holds, 
both \\ $\displaystyle \min_{\vector{z} \in \rn} \|AA^{\trans}\vector{z} - \vector{b}\|_{2}$ and 
$\displaystyle \min_{\vector{z} \in \rn} \|ACA^{\trans}\vector{z} - \vector{b}\|_{2}$ are 
equivalent to $A^{\trans}\vector{r} = \vector{0}$.
Thus, when solving 
$\displaystyle \min_{\vector{z} \in \rn} \|AA^{\trans}\vector{z} - \vector{b}\|_{2}$ and
$\displaystyle \min_{\vector{z} \in \rn} \|ACA^{\trans}\vector{z} - \vector{b}\|_{2}$,
it is appropriate to evaluate the convergence of GMRES by 
$\displaystyle \frac{\|A^{\trans}\vector{r}_{k}\|_{2}}{\|A^{\trans}\vector{r}_{0}\|_{2}}$.
Here, $\vector{r}_{k} = \vector{b} - A\vector{x}_{k}$ for $k = 0, 1, 2,...$.

We compared the performance of GMRES using reorthogonalization and pseudoinverse with
that of GMRES using pseudoinverse for the GP inconsistent systems. As a result, 
the performance of GMRES using reorthogonalization and pseudoinverse was better than
that of GMRES using pseudoinverse.
Thus, in this section, we describe the performance of GMRES using reorthogonalization and 
pseudoinverse.

Fig. \ref{ag_i} 
shows $\displaystyle \frac{\|A^{\trans}\vector{r}_{k}\|_{2}}{\|A^{\trans}\vector{b}\|_{2}}$ versus 
the iteration number $k$ for GMRES using reorthogonalization.
Fig. \ref{aatg_i} and \ref{acatg_i}
show $\displaystyle \frac{\|A^{\trans}\vector{r}_{k}\|_{2}}{\|A^{\trans}\vector{b}\|_{2}}$ versus 
the iteration number $k$ for AB-GMRES using reorthogonalization and pseudoinverse when $B = A^{\trans}$
and $B = CA^{\trans}$, respectively.

As the tolerance value $tol$ for the threshold for the pseudoinverse, 
$10^{-8}\times \sigma_{1}(H)$, $10^{-10} \times \sigma_{1}(H)$, 
$10^{-11}\times \sigma_{1}(H)$ were used as shown
in the captions of the figures. Here, $\sigma_{1}$ is $\sigma_{1}(H_{k+1,k})$.

\begin{figure}[htbp]
\begin{center}
\includegraphics[scale=0.55]{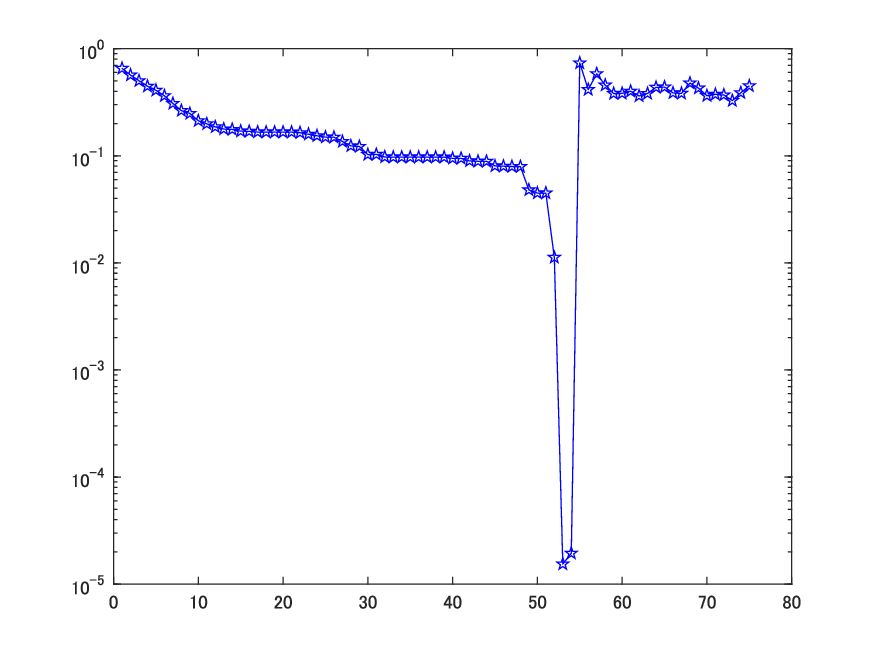}
\caption{$\displaystyle \frac{\|A^{\trans}\vector{r}_{k}\|_{2}}{\|A^{\trans}\vector{b}\|_{2}}$ versus 
the iteration number for GMRES using reorthogonalization}
\label{ag_i}
\end{center}
\end{figure}

\begin{figure}[htbp]
\begin{center}
\includegraphics[scale=0.55]{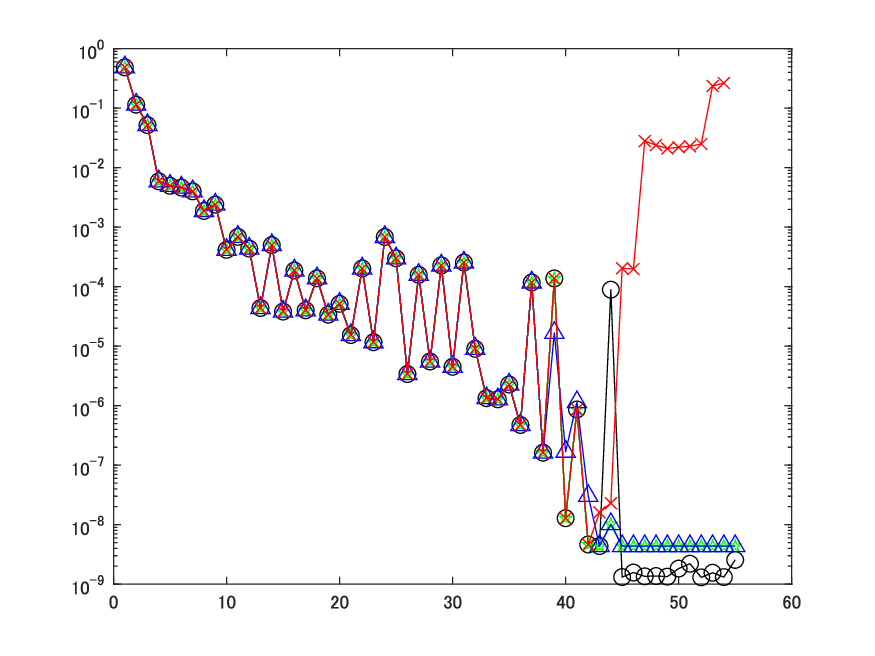}
\caption{$\displaystyle \frac{\|A^{\trans}\vector{r}_{k}\|_{2}}{\|A^{\trans}\vector{b}\|_{2}}$ versus 
the iteration number for AB-GMRES using reorthogonalization and pseudoinverse with 
$10^{-11}\sigma_{1}$ ($\circ$), 
$10^{-8}\sigma_{1}$ ($\triangle$), 
and no pseudoinverse ($\times$) when $B=A^{\trans}$}
\label{aatg_i}
\end{center}
\end{figure}

\begin{figure}[htbp]
\begin{center}
\includegraphics[scale=0.55]{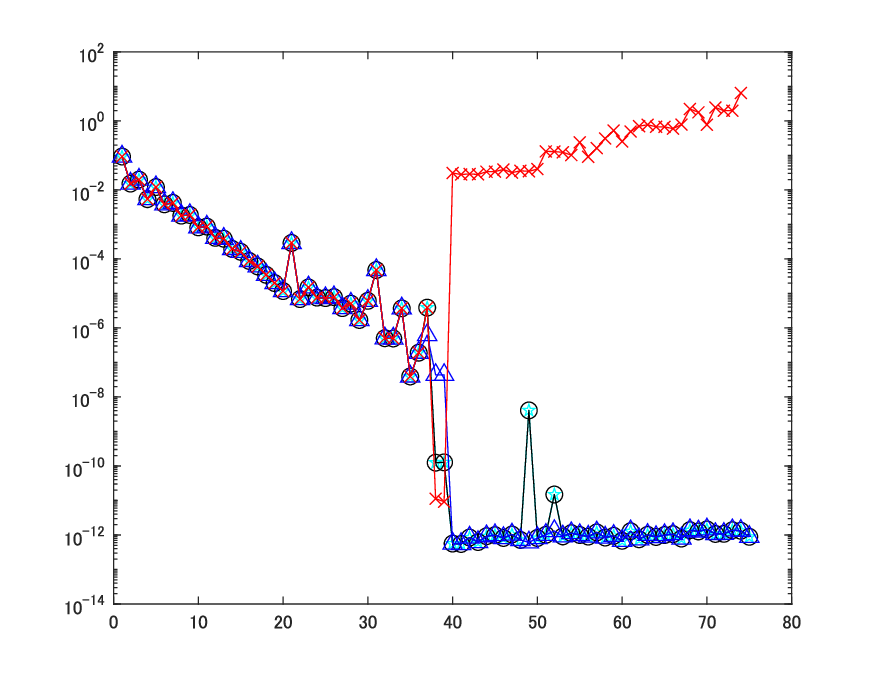}
\caption{$\displaystyle \frac{\|A^{\trans}\vector{r}_{k}\|_{2}}{\|A^{\trans}\vector{b}\|_{2}}$ versus 
the iteration number for AB-GMRES using reorthogonalization and pseudoinverse with
$10^{-11}\sigma_{1}$ ($\circ$),
$10^{-8}\sigma_{1}$ ($\triangle$), and no pseudoinverse ($\times$) when $B=CA^{\trans}$}
\label{acatg_i}
\end{center}
\end{figure}

We observe the following from Fig. \ref{ag_i}, \ref{aatg_i} and \ref{acatg_i}.
\begin{itemize}
\item GMRES using reorthogonalization does not converge for this GP inconsistent system in accordance to 
the convergence theory of GMRES for GP inconsistent systems, while AB-GMRES using 
reorthogonalization and pseudoinverse converges.

\item When we use $10^{-8} \times \sigma_{1}(H_{k+1,k})$
as the tolerance value of pseudoinverse,
the minimum value of $\frac{\|A^{\trans}\vector{r}_{k}\|_{2}}{\|A^{\trans}\vector{b}\|_{2}}$ for AB-GMRES 
using reorthogonalization and
pseudoinverse with preconditioning is $10^{-4}$ times smaller than without preconditioning. 
\end{itemize}

\subsection{Numerical experiments for GP consistent systems}
The right-hand side vectors $\vector{b}$ were set as follows.
\begin{itemize}
\item $\vector{b} = \frac{A \times (1,1,.,1)^{\trans}}{\|A \times (1,1,...,1)^{\trans}\|_{2}}$
\end{itemize}
We evaluate the convergence by $\frac{\|\vector{r}\|_{2}}{\|\vector{b}\|_{2}}$.

Fig. \ref{ag_c}, \ref{aatg_c} and \ref{acatg_c} 
show $\displaystyle \frac{\|\vector{r}_{k}\|_{2}}{\|\vector{b}\|_{2}}$ versus 
the iteration number for GMRES using reorthogonalization, AB-GMRES using reorthogonalization when 
$B = A^{\trans}, CA^{\trans}$.

\begin{figure}[htbp]
\begin{center}
\includegraphics[scale=0.55]{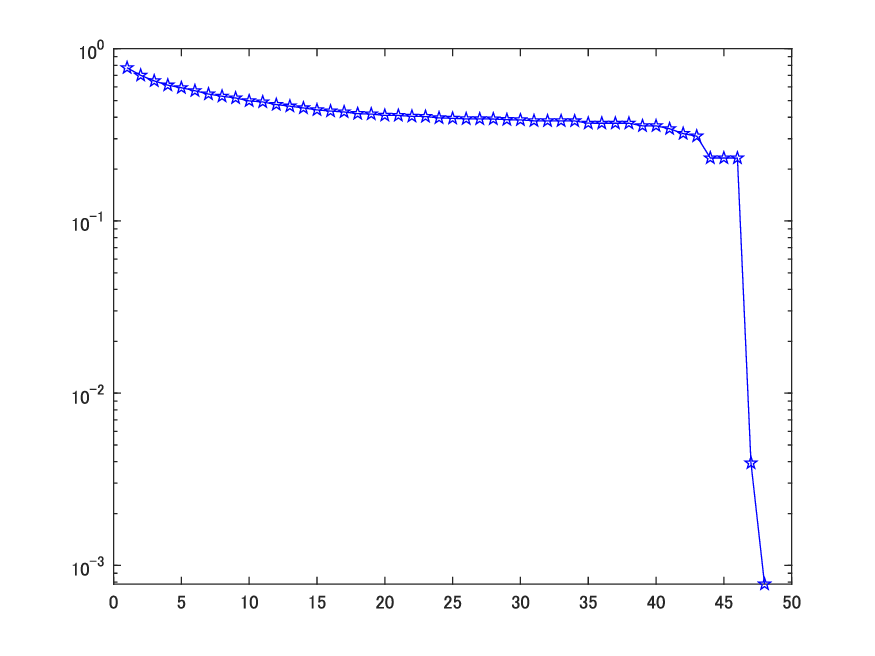}
\caption{$\displaystyle \frac{\|\vector{r}_{k}\|_{2}}{\|\vector{b}\|_{2}}$ versus 
the iteration number for GMRES using reorthogonalization}
\label{ag_c}
\end{center}
\end{figure}

\begin{figure}[htbp]
\begin{center}
\includegraphics[scale=0.55]{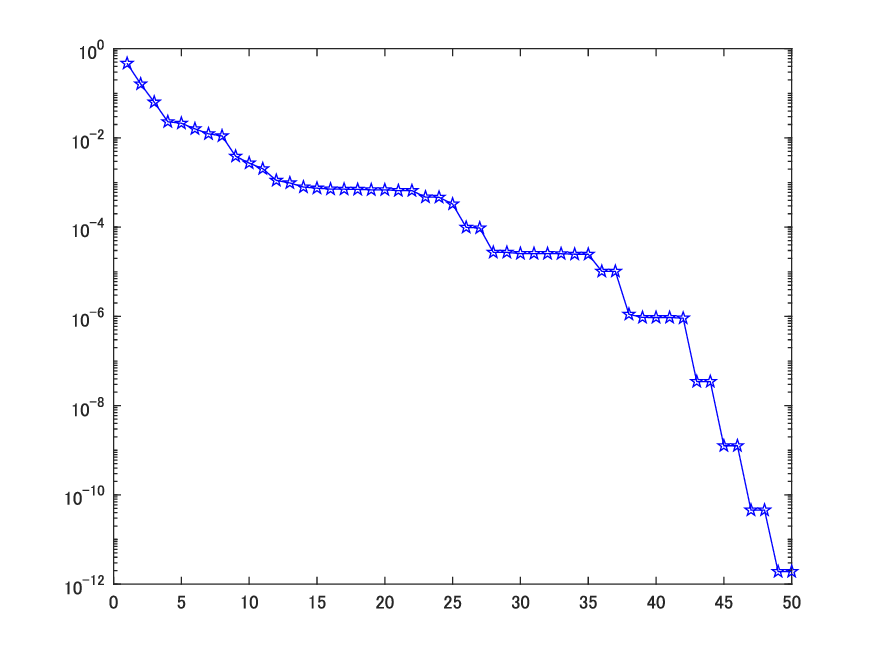}
\caption{$\displaystyle \frac{\|\vector{r}_{k}\|_{2}}{\|\vector{b}\|_{2}}$ versus 
the iteration number for AB-GMRES using reorthogonalization when $B=A^{\trans}$}
\label{aatg_c}
\end{center}
\end{figure}

\begin{figure}[htbp]
\begin{center}
\includegraphics[scale=0.55]{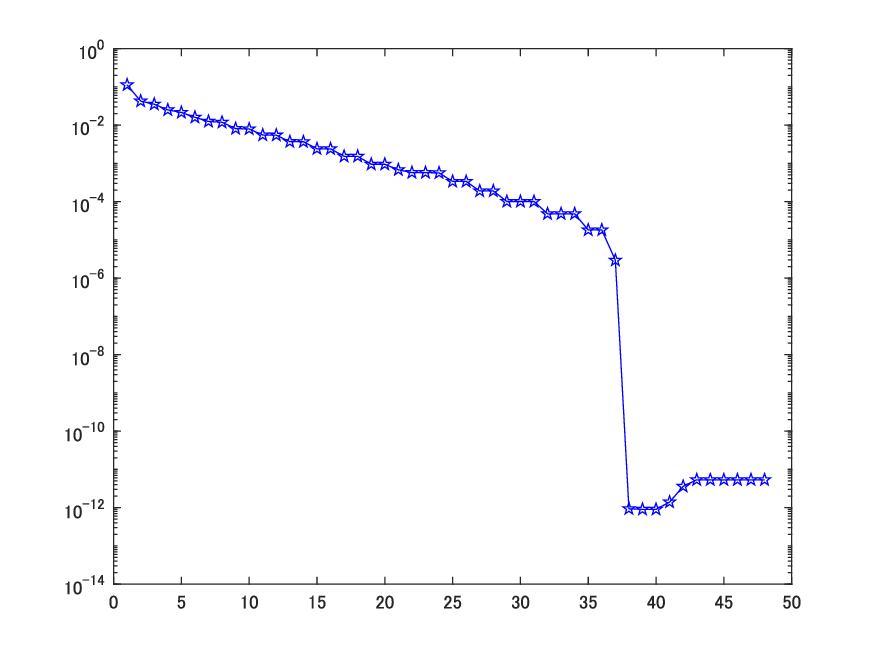}
\caption{$\displaystyle \frac{\|\vector{r}_{k}\|_{2}}{\|\vector{b}\|_{2}}$ versus 
the iteration number for AB-GMRES using reorthogonalization when $B=CA^{\trans}$}
\label{acatg_c}
\end{center}
\end{figure}

In Fig. \ref{ag_c}, GMRES using reorthogonalization breaks down at the 49th iteration 
with $h_{50,49} < 10^{-15}$
after
reaching $\displaystyle \frac{\|\vector{r}_k\|_{2}}{\|\vector{b}\|_{2}} \sim 10^{-3}$ at 
the 48th iteration. 

We observe the following from Fig. \ref{ag_c}, \ref{aatg_c} and \ref{acatg_c}.

\begin{itemize}
\item For this GP consistent system, GMRES using reorthogonalization does not converge unlike
the convergence theory of GMRES for GP consistent systems, 
while AB-GMRES using reorthogonalization converges.

\item $\frac{\|\vector{r}_{k}\|_{2}}{\|\vector{b}\|_{2}}$ of AB-GMRES using 
reorthogonalization when $B = CA^{\trans}$ converges faster than when $B=A^{\trans}$. 
\end{itemize}

\section{Numerical experiments for index 2 systems}\label{NumId2}
The index $2$ matrix $A \in {\bf R}^{128 \times 128}$ is as follows.
\begin{eqnarray*}
\left[
\begin{array}{cc}
A_{11} & A_{12} \\
0                                 &  A_{22}                            
\end{array}
\right].
\end{eqnarray*}
where $A_{11}, A_{12}, A_{22} \in {\bf R}^{64 \times 64}$.
Here, $A_{11}, A_{12}, J_{k}(\lambda), \alpha, \beta$ are the same as those in Section \ref{NumGP}.
The $(2\times i + 63 ,2\times i+ 64)(1 \leq i \leq 16)$th element of $A_{22}$ is $1$, and 
other elements are $0$. Thus, $A$ is an index $2$ matrix.

For the above matrix $A$, let $C \in \rnn$ be $C = \{{\rm diag}(A^{\trans}A)\}^{-1}$.
Then, $C$ is symmetric positive definite.
In this section, let $\rho$ be $12$ and $\gamma$ be $15$.

We compared the performance of AB-GMRES using reorthogonalization and pseudoinverse with
that of AB-GMRES using pseudoinverse for this index $2$ inconsistent systems. Then, 
the performance of AB-GMRES using pseudoinverse was at the same level as
that of AB-GMRES using reorthogonalization and pseudoinverse.
Thus, in this section, we describe the performance of AB-GMRES using  
pseudoinverse without reorthogonalization. 

\subsection{Numerical experiments for index 2 inconsistent systems}\label{NumId2_i}
As GP inconsistent systems, the right-hand side vectors $\vector{b}$ were set as follows, 
where $\vector{u}(0,1)$ 
is an $n$ dimensional vector of pseudorandom numbers generated according to the uniform 
distribution over the range (0,1).
\begin{itemize}
\item $\vector{b} = \frac{A \times (1,1,.,1)^{\trans}}{\|A \times (1,1,...,1)^{\trans}\|_{2}}
 + \frac{\vector{u}(0,1)}{\|\vector{u}(0,1)\|_{2}}\times 0.01$
\end{itemize}

Fig. \ref{aid2_i} 
shows $\displaystyle \frac{\|A^{\trans}\vector{r}_{k}\|_{2}}{\|A^{\trans}\vector{b}\|_{2}}$ versus 
the iteration number $k$ for GMRES.
Fig. \ref{aatid2_i} and \ref{acatid2_i}
show $\displaystyle \frac{\|A^{\trans}\vector{r}_{k}\|_{2}}{\|A^{\trans}\vector{b}\|_{2}}$ versus 
the iteration number $k$ for AB-GMRES using pseudoinverse when $B = A^{\trans}$
and $B = CA^{\trans}$.

\begin{figure}[htbp]
\begin{center}
\includegraphics[scale=0.55]{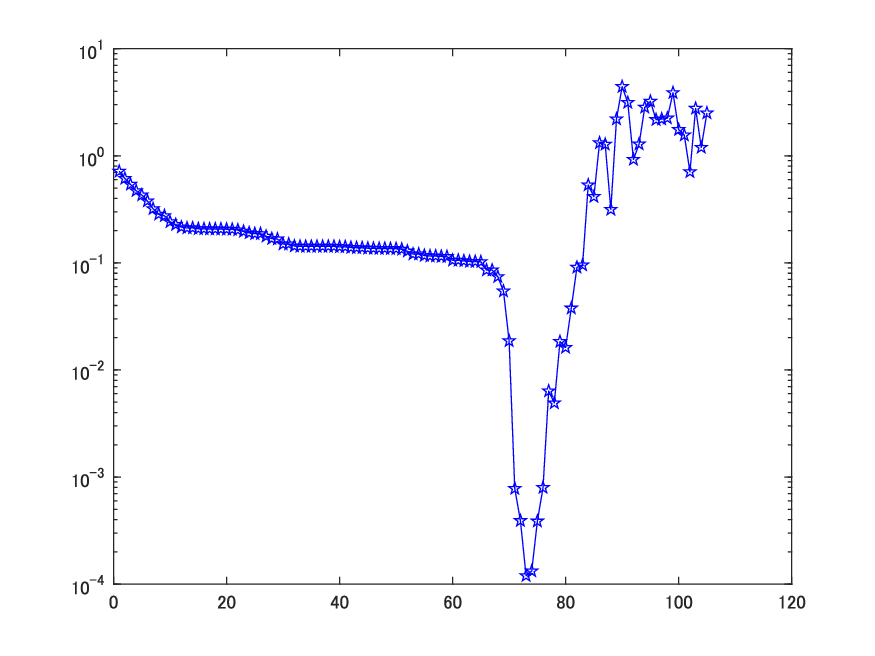}
\caption{$\displaystyle \frac{\|A^{\trans}\vector{r}_{k}\|_{2}}{\|A^{\trans}\vector{b}\|_{2}}$ versus 
the iteration number for GMRES}
\label{aid2_i}
\end{center}
\end{figure}

\begin{figure}[htbp]
\begin{center}
\includegraphics[scale=0.55]{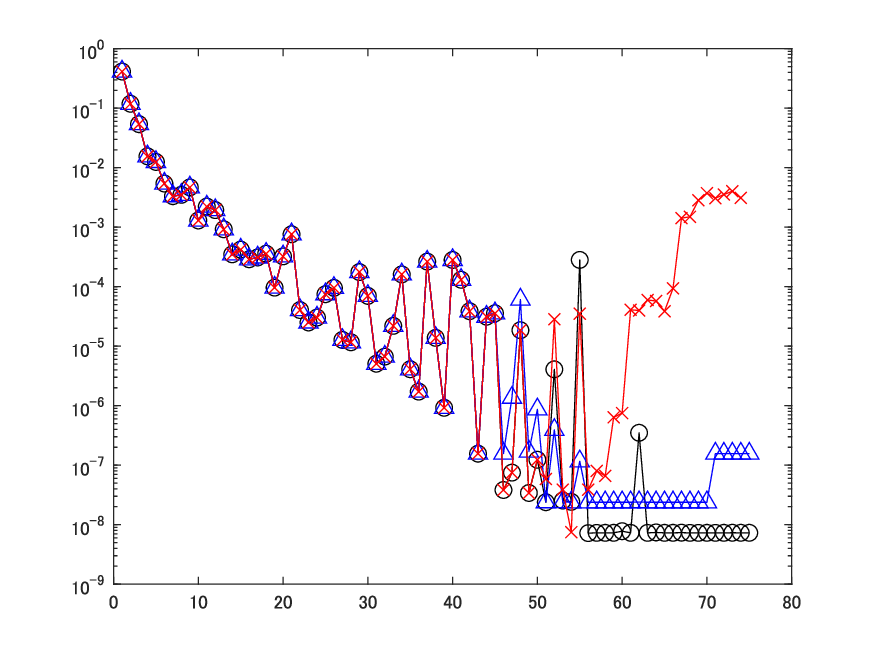}
\caption{$\displaystyle \frac{\|A^{\trans}\vector{r}_{k}\|_{2}}{\|A^{\trans}\vector{b}\|_{2}}$ versus 
the iteration number for AB-GMRES using pseudoinverse with 
$10^{-10}\sigma_{1}$ ($\circ$), $10^{-8}\sigma_{1}$ ($\triangle$), 
and no pseudoinverse ($\times$) when $B=A^{\trans}$}
\label{aatid2_i}
\end{center}
\end{figure}

\begin{figure}[htbp]
\begin{center}
\includegraphics[scale=0.55]{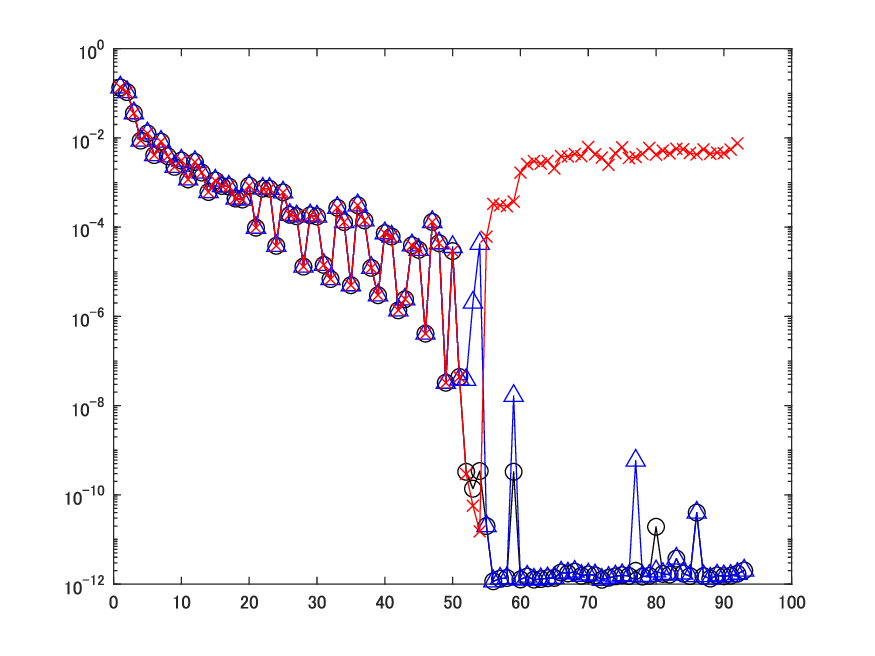}
\caption{$\displaystyle \frac{\|A^{\trans}\vector{r}_{j}\|_{2}}{\|A^{\trans}\vector{b}\|_{2}}$ versus 
the iteration number for AB-GMRES using pseudoinverse with
$10^{-10}\sigma_{1}$ ($\circ$), 
$10^{-8}\sigma_{1}$ ($\triangle$), and no pseudoinverse ($\times$) when $B=CA^{\trans}$}
\label{acatid2_i}
\end{center}
\end{figure}

We observe the following from Fig. \ref{aid2_i}, \ref{aatid2_i} and \ref{acatid2_i}.
\begin{itemize}
\item While GMRES does not converge for this index 2 inconsistent system according to the convergence theory of 
GMRES for GP inconsistent systems, 
AB-GMRES converges. 

\item 
The minimum value of $\frac{\|A^{\trans}\vector{r}_{k}\|_{2}}{\|A^{\trans}\vector{b}\|_{2}}$ for AB-GMRES using
pseudoinverse with preconditioning is at least $10^{-3}$ times smaller than without preconditioning. 
\end{itemize}

\subsection{Numerical experiments for index 2 consistent systems}\label{NumId2_c}
The right-hand side vectors $\vector{b}$ were set as follows.
\begin{itemize}
\item $\vector{b} = \frac{A \times (1,1,.,1)^{\trans}}{\|A \times (1,1,...,1)^{\trans}\|_{2}}$
\end{itemize}
We evaluate the convergence by $\frac{\|\vector{r}\|_{2}}{\|\vector{b}\|_{2}}$.

Fig. \ref{aid2_c} 
shows $\displaystyle \frac{\|\vector{r}_{k}\|_{2}}{\|\vector{b}\|_{2}}$ versus 
the iteration number for GMRES.
Fig. \ref{aatid2_c} and \ref{acatid2_c}
show $\displaystyle \frac{\|\vector{r}_{k}\|_{2}}{\|\vector{b}\|_{2}}$ versus 
the iteration number for AB-GMRES when $B = A^{\trans}$
and $B = CA^{\trans}$.

\begin{figure}[htbp]
\begin{center}
\includegraphics[scale=0.55]{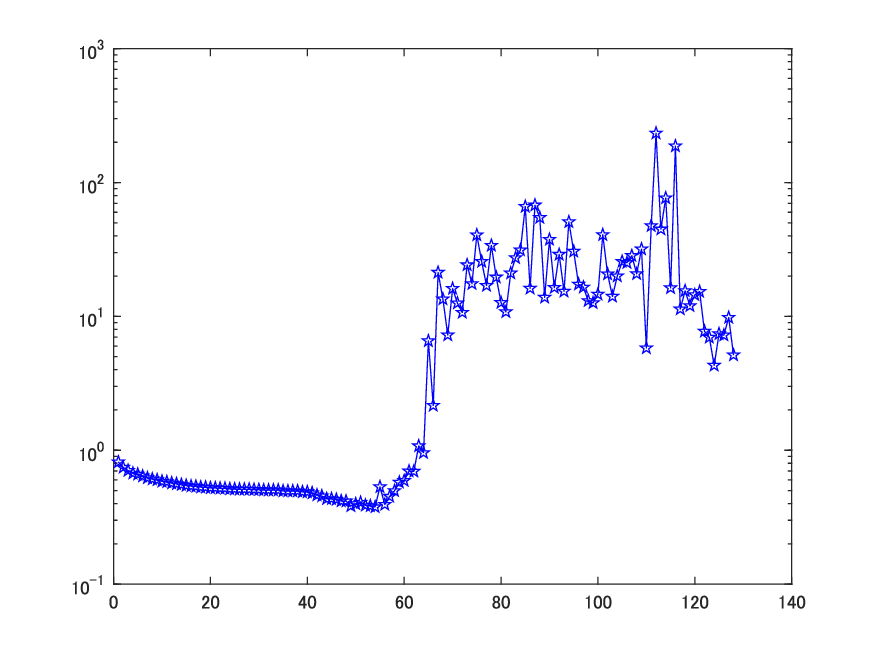}
\caption{$\displaystyle \frac{\|\vector{r}_{k}\|_{2}}{\|\vector{b}\|_{2}}$ versus 
the iteration number for GMRES}
\label{aid2_c}
\end{center}
\end{figure}

\begin{figure}[htbp]
\begin{center}
\includegraphics[scale=0.55]{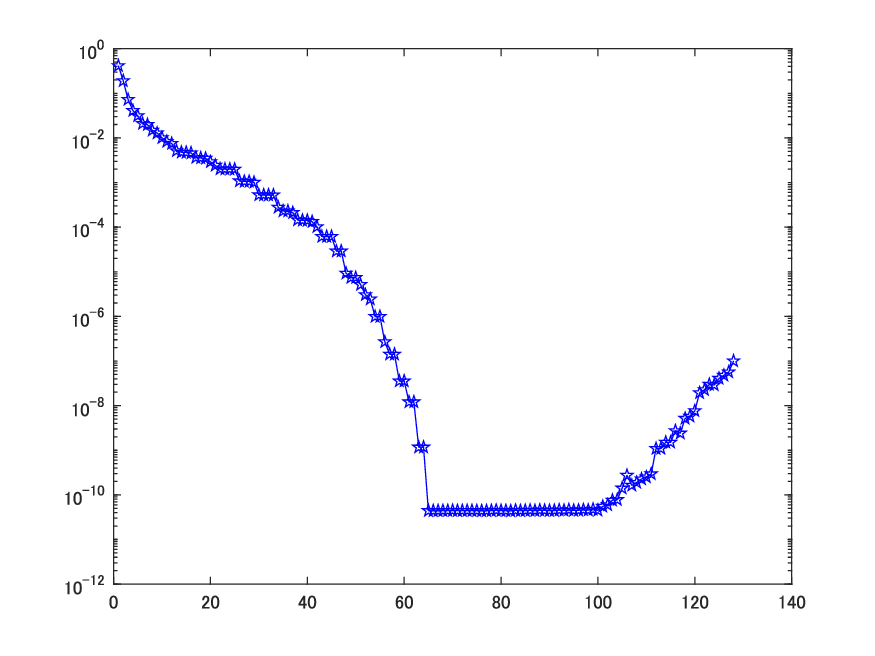}
\caption{$\displaystyle \frac{\|\vector{r}_{k}\|_{2}}{\|\vector{b}\|_{2}}$ versus 
the iteration number for AB-GMRES when $B=A^{\trans}$}
\label{aatid2_c}
\end{center}
\end{figure}

\begin{figure}[htbp]
\begin{center}
\includegraphics[scale=0.55]{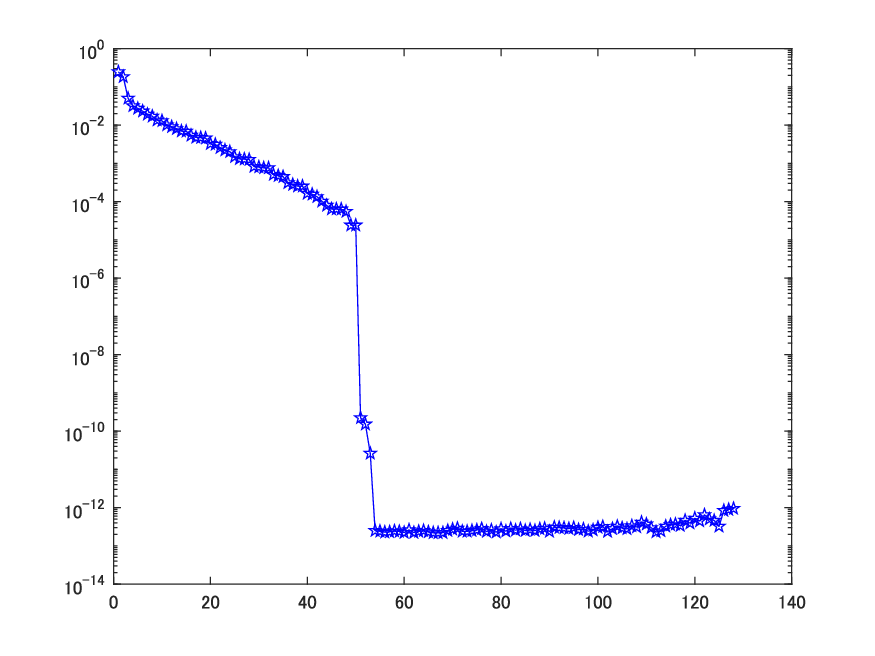}
\caption{$\displaystyle \frac{\|\vector{r}_{k}\|_{2}}{\|\vector{b}\|_{2}}$ versus 
the iteration number for AB-GMRES when $B=CA^{\trans}$}
\label{acatid2_c}
\end{center}
\end{figure}

We observe the following from Fig. \ref{aid2_c}, \ref{aatid2_c} and \ref{acatid2_c}.

\begin{itemize}
\item GMRES does not converge in accordance to
the convergence theory of GMRES for index 2 consistent systems.

\item AB-GMRES with $B=A^{\trans}$ converges.
 
\item The minimum value of $\frac{\|\vector{r}_{k}\|_{2}}{\|\vector{b}\|_{2}}$ for AB-GMRES 
when $B = CA^{\trans}$ is at least $10^{-2}$ times smaller than when $B=A^{\trans}$. 
\end{itemize}

\section{Comparison of left preconditioned GMRES with right preconditioned GMRES}\label{Lfgm}
The normal equations of the first kind $A^{\trans}A\vector{x} = A^{\trans}\vector{b}$ is equivalent to the 
least squares problem (\ref{lstsq2}). 
Then, we will compare the convergence of GMRES applied to $A^{\trans}A\vector{x} = A^{\trans}\vector{b}$
with the convergence of AB-GMRES when $B = A^{\trans}$ and $B = CA^{\trans}$ with $C = \{{\rm diag}(A^{\trans}A)\}^{-1}$.
For numerical experiments, we use the GP consistent and inconsistent systems in Section \ref{NumGP}.
Here, GMRES applied to $A^{\trans}A\vector{x} = A^{\trans}\vector{b}$ is BA-GMRES with $B = A^{\trans}$ \cite{HYI},
where
BA-GMRES is the left preconditioned GMRES.
For $\vector{r}_{k} = \vector{b} - A\vector{x}_{k}$ where $\vector{x}_{k}$ is the $k$th iterate, 
AB-GMRES minimizes $\|\vector{r}_{k}\|_{2}$. On the other hand, BA-GMRES with $B = A^{\trans}$
minimizes $\|A^{\trans}\vector{r}_{k}\|_{2}$.

\subsection{Left preconditioned GMRES for GP inconsistent systems}\label{lfgm_i}
Fig. \ref{alg_i} 
shows $\displaystyle \frac{\|A^{\trans}\vector{r}_{k}\|_{2}}{\|A^{\trans}\vector{b}\|_{2}}$ versus 
the iteration number for BA-GMRES using reorthogonalization when $B = A^{\trans}$.

\begin{figure}[htbp]
\begin{center}
\includegraphics[scale=0.55]{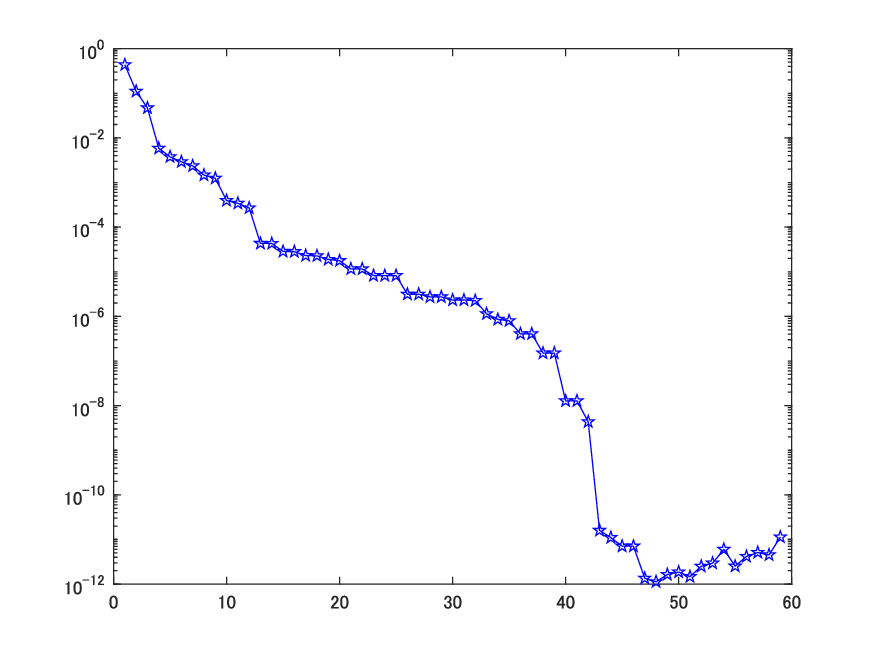}
\caption{$\displaystyle \frac{\|A^{\trans}\vector{r}_{k}\|_{2}}{\|A^{\trans}\vector{b}\|_{2}}$ versus 
the iteration number for BA-GMRES using reorthogonalization when $B = A^{\trans}$}
\label{alg_i}
\end{center}
\end{figure}

We observe the following from Fig. \ref{aatg_i}, \ref{acatg_i} and \ref{alg_i}.
\begin{itemize}
\item $\displaystyle \frac{\|A^{\trans}\vector{r}_{k}\|_{2}}{\|A^{\trans}\vector{b}\|_{2}}$ of BA-GMRES with
$B = A^{\trans}$ is $10^{-3}$ times smaller than that of AB-GMRES using pseudoinverse with $B = A^{\trans}$.
Since AB-GMRES minimizes $\|\vector{r}_{k}\|_{2}$ and BA-GMRES with $B = A^{\trans}$
minimizes $\|A^{\trans}\vector{r}_{k}\|_{2}$, BA-GMRES with $B = A^{\trans}$ is better than AB-GMRES 
with $B=A^{\trans}$ for this inconsistent system.

\item The minimum value of $\displaystyle \frac{\|A^{\trans}\vector{r}_{k}\|_{2}}{\|A^{\trans}\vector{b}\|_{2}}$ of BA-GMRES 
with $B = A^{\trans}$ is almost the same as that of AB-GMRES using pseudoinverse with $B = CA^{\trans}$. While
$\displaystyle \frac{\|A^{\trans}\vector{r}_{k}\|_{2}}{\|A^{\trans}\vector{b}\|_{2}}$ of BA-GMRES increases 
a little when the iteration number is larger than $50$, 
that of AB-GMRES using pseudoinverse with $B = CA^{\trans}$ remains at the minimum level.
Thus, AB-GMRES using pseudoinverse with $B = CA^{\trans}$ is better than BA-GMRES with $B = A^{\trans}$.
We note that BA-GMRES with $B=CA^{\trans}$ does not converge so well.

\end{itemize}

\subsection{Left preconditioned GMRES for GP consistent systems}\label{lfgm_c}
Fig. \ref{alg_c} 
shows $\displaystyle \frac{\|\vector{r}_{k}\|_{2}}{\|\vector{b}\|_{2}}$ versus 
the iteration number for BA-GMRES using reorthogonalization when $B = A^{\trans}$,
for a consistent system.

\begin{figure}[htbp]
\begin{center}
\includegraphics[scale=0.55]{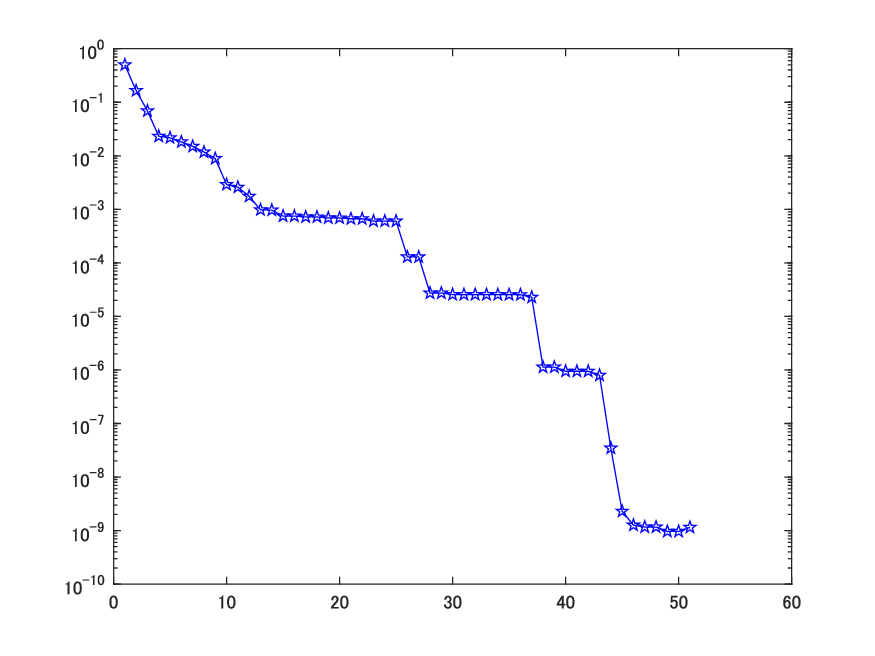}
\caption{$\displaystyle \frac{\|\vector{r}_{k}\|_{2}}{\|\vector{b}\|_{2}}$ versus 
the iteration number for BA-GMRES using reorthogonalization when $B = A^{\trans}$}
\label{alg_c}
\end{center}
\end{figure}

We observe the following from Fig. \ref{aatg_c}, \ref{acatg_c} and \ref{alg_c}.
\begin{itemize}
\item $\displaystyle \frac{\|\vector{r}_{k}\|_{2}}{\|\vector{b}\|_{2}}$ of AB-GMRES using reorthogonalization 
with $B = A^{\trans}$ and $B = CA^{\trans}$ is $10^{-3}$ times smaller than BA-GMRES
using reorthogonalization with $B = A^{\trans}$.
Since AB-GMRES minimizes $\|\vector{r}_{k}\|_{2}$ and BA-GMRES with $B = A^{\trans}$
minimizes $\|A^{\trans}\vector{r}_{k}\|_{2}$, 
AB-GMRES with $B = A^{\trans}$ is better than BA-GMRES 
with $B=A^{\trans}$ for this consistent system.
\end{itemize}


\section*{Acknowledgments}
We would like to thank Dr. Keiichi Morikuni for discussions.

\section{Concluding remarks}\label{sec:ConcL}
We introduced the right preconditioned GMRES for arbitrary singular systems and
proposed using pseudoinverse (pinv of MATLAB) with an appropriate threshold to 
solve the severely ill-conditioned Hessenberg systems arising from 
the Arnoldi process in GMRES for inconsistent systems.
Some numerical experiments on GP and index 2 systems indicate that
the method is effective and robust. We also compared the convergence of the right preconditioned GMRES using pseudoinverse 
with the left preconditioned GMRES, and
showed that the right preconditioned GMRES using pseudoinverse was better for a GP system.
This is only a proof of concept since pinv is expensive.
We are currently investigating more practical ways to implement the idea, as well as
more efficient preconditioners for C.


\end{document}